\newtheorem{theorem}{Theorem}	
\newtheorem{lemma}[theorem]{Lemma}
\theoremstyle{definition}
\theoremstyle{remark}
\newcommand{\bs}{\mathop{\rm BS}\nolimits}
\newcommand{\conv}{\mathop{\rm conv}\nolimits}
\begin{document}
\sloppy
\ifpdf
\DeclareGraphicsExtensions{.pdf, .jpg, .tif, .mps}
\else
\DeclareGraphicsExtensions{.eps, .jpg, .mps}
\fi

\title{Balanced 2--subsets}

\author{Mikhail V. Bludov and Oleg R. Musin}

\date{}

\maketitle

\begin{abstract} 
Balanced sets appeared in the 1960s in cooperative game theory as a part of nonempty core conditions. In this paper we present  a classification of balanced families containing only 2--element subsets. We also discuss generalizations of the classical Sperner and Tucker lemmas using balanced sets.  
\end{abstract}

\section{Introduction} 

Balanced sets as families of weighted subsets of a finite set first appeared in Bondareva \cite{Bon} and Shapley \cite{Sh67} papers. 

Denote by $[d]$ the set $\{1,2,...,d\}$. Let $\Phi$ be a family of subsets $\{S_1,...,S_m\}$ of $[d]$. Following Shapley \cite{Sh}, this family is called {\em balanced} if there is a set of non--negative weights $\{w_1,...,w_m\}$ such that $$
\sum\limits_{k=1}^m{w_k\eta_k=(1,...,1)},
$$ where $\eta_k$ is the characteristic (indicator) vector of $S_k$ in $[d]$. A balanced family $\Phi$ is called {\em minimal} if there are no proper balanced subfamilies in $\Phi$.

\medskip

Now we consider families that contain only 2--element subsets. We say that a family $F$ is of {\em odd size} if it contains an odd number of subsets. 

Let $I=\{i_1,...,i_n\}$, where $n\ge3$.  We say that a family $F$ of subsets from $I$ is {\em cyclic with respect to $I$} if 
$F=\{(i_1,i_2),(i_2,i_3),...,(i_n,i_1)\}$. If $n=2$, then there is only one $2$--subset of $I$. In this case we call $F=\{(i_1,i_2)\}$ {\em isolated}. 

The proof of the following theorem is given in Section 2. 
\begin{theorem}
Let $\Phi$ be a minimal balanced family of $2$--subsets in $[d]$.  Then $[d]$ is the disjoint union of subsets $I_1,...,I_k$ and $\Phi=\{\Phi_\ell\}_ {\ell=1,...,k}$, where $\Phi_\ell$ is either cyclic of odd size with respect to $I_\ell$ or it is isolated. % and if $|'Phi_i|>1$, then $\Phi_i$ is cyclic.Then $\Phi$ is the disjoint union of families $\Phi_1,...,\Phi_k$ of odd size and if $|'Phi_i|>1$, then $\Phi_i$ is cyclic.
\end{theorem}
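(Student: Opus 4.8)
The plan is to translate everything into graph theory: a family of $2$-subsets of $[d]$ is exactly a graph $G$ on vertex set $[d]$, and the condition $\sum_k w_k\eta_k=(1,\dots,1)$ with $w_k\ge 0$ says precisely that the weights form a \emph{fractional perfect matching} of $G$, i.e.\ the total weight of the edges at each vertex equals $1$. Thus ``$\Phi$ is balanced'' means ``$G$ admits a fractional perfect matching,'' and the two target structures carry the expected weights: weight $1$ on an isolated edge, and weight $\tfrac12$ on each edge of an odd cycle.

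First I would reduce to the connected case. Since every edge lies inside a single connected component and each vertex constraint involves only edges of its own component, a fractional perfect matching of $G$ is just the disjoint union of fractional perfect matchings of its components, and conversely such pieces combine. From this it follows that if $\Phi$ is minimal then each component is itself minimal balanced, and also that $G$ has no isolated vertex (an uncovered vertex cannot meet its constraint). So the components yield the partition $[d]=I_1\sqcup\cdots\sqcup I_k$, and it suffices to prove that a \emph{connected} minimal balanced graph is a single edge or an odd cycle.

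The engine is the observation that minimality forces \emph{every} fractional perfect matching $w$ of a connected minimal balanced $G$ to have full support: the support of any such $w$ is itself a balanced subfamily, so a proper support would contradict minimality. Hence there is a strictly positive $w$, with two consequences. First, a \emph{leaf} argument: a degree-$1$ vertex $v$ with edge $vu$ forces $w_{vu}=1$, hence weight $0$ on all other edges at $u$; by strict positivity $u$ is also a leaf and $G=K_2$. So either $G$ is a single edge or its minimum degree is $\ge 2$, whence $|E|\ge|V|$. Second, a \emph{perturbation} argument on the incidence matrix $B$: if $\ker B\ne 0$, pick $z\ne 0$ with $Bz=0$ and move along $w+tz$ (which remains a fractional perfect matching) until some coordinate first vanishes; this gives a matching of proper support, again contradicting minimality. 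Therefore $\ker B=0$, so $|E|=\operatorname{rank}B\le|V|$.

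Finally I would combine these. In the nontrivial case $|E|=|V|$ and $\operatorname{rank}B=|V|$; the standard fact that the unsigned incidence matrix of a connected graph has rank $|V|$ iff the graph is non-bipartite then shows $G$ is non-bipartite. A connected graph with $|E|=|V|$ and minimum degree $\ge 2$ is a single cycle, and a non-bipartite cycle is an odd cycle, matching the ``cyclic of odd size'' description (an $n$-cycle with $n$ odd has an odd number of edges). I expect the main obstacle to be the perturbation step — verifying that one can always travel along the kernel direction to the boundary of the nonnegative orthant and extract a genuinely proper balanced subfamily — together with invoking the correct rank dichotomy for the incidence matrix, since it is exactly the bipartite/non-bipartite distinction that rules out even cycles and isolates the odd ones.
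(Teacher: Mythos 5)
Your proof is correct and, despite the change of language from convex geometry to fractional perfect matchings, follows essentially the same route as the paper: reduction to connected components, exclusion of degree-one vertices, the bound $|E|\le|V|$ (which the paper obtains from Carath\'eodory's theorem applied to the edge-midpoints $\tfrac12(e_i+e_j)$ --- and the standard proof of Carath\'eodory is exactly your kernel-perturbation step), and finally the identification of each nontrivial component as an odd cycle. The only detail handled differently is the exclusion of even cycles: the paper exhibits the alternating perfect matching explicitly, while you detect it via the rank of the incidence matrix of a bipartite graph; these are the same obstruction, so both versions are sound.
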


These decompositions correspond to partitions of $d$ into parts 2, 3, 5,... 
The generating function of this partition is 
$$
\frac{1}{1+x}\prod\limits_{i=0}^\infty{\frac{1}{1-x^{2i+1}}}.
$$
It is easy to show the connection with the odd partitions. Denote by $b(d)$ the number of our partitions. Let $q(d)$ be the number of odd partitions. Then 
$$
b(d)=q(d)-q(d-1)+...+(-1)^dq(0).
$$ 

We can apply theorem 5 for cooperative games with 2-players coalitions. From the Bondareva -- Shapley theorem it follows that we can check the non--empty core condition only on the minimal balanced sets. Since we know a classification of these sets we can simplify the condition for the non-emptiness of the core.  

\medskip

%%%%%%%%%%%%%%%

Here we give a general geometrical definition of balanced sets.

Let  $V=\{v_1, v_2, \dots, v_m\}$ be a set of points in $\mathbb{R}^{d}$.  
A subset $\{v_{i_1}, \dots, v_{i_k}\}$ is called {\em balanced} if $c_V$ lies in the convex hull $\text{conv}(v_{i_1}, \dots, v_{i_k})$, where $c_V$ is the center of mass of $V$. The corresponding set of indices $I=\{i_1, \dots, i_k\}$ is also called balanced.

A balanced subset of $V$ is {\em minimal} if and only if it does not contain a subset that is balanced. Denote the family  of {minimal balanced subsets} as $\bs(V)$.

Sperner's lemma on colorings of triangulations vertices and its extension to coverings  Knaster –Kuratowski -- Mazurkiewicz (KKM) lemma are discrete analogs of the Brower's fixed point theorem. KKM may be extended to KKMS theorem, see \cite{Sh,SV}. All these theorems have many applications, particularly in game theory and mathematical economics.

Tucker, Ky Fan, and Shashkin's lemmas are discrete versions of the Borsuk--Ulam theorem. They also have various applications. 

In Section 3, we consider Theorems A and B, which are generalizations of discrete versions of fixed point theorems that rely on balanced sets of $V$. If for some $V$ we know its $\bs(V)$, then that gives explicit versions of these theorems.

%%%%%%%%%%%%%%%%%%%%%%%%%%%
%%%%%%%%%%%%%%%%%%%%%%%%%%%       

\section{Proof of Theorem 1}

Let $\Phi$ be a family that contains only 2--element subsets from $[d]$. This family can be described geometrically with an orthonormal basis $e_1,\dots, e_{d}$ of $\mathbb{R}^{d}$. To every $2$-element subset $(i,j)$ we assign a point in $\mathbb{R}^{d}$: 
\[
e_{ij}:=\frac{1}{2}(e_i+e_j)\; \; (1\leqslant i<j\leqslant d).
\]
This set of points we denote by $V_d$. Obviously, all these ${d}\choose {2}$ points in $V_d$ lie in a hyperplane $\Pi_d$ that defined by an equation $x_1+\ldots+x_{d}=1$. 

(Note that the points $V_d$ are midpoints of edges of a $(d-1)$-dimensional simplex in $\Pi_d$ with the vertex set $e_1,...,e_d$.
A polytope $P_d:=\conv(V_d)$ plays an important role in discrete geometry, graph theory and coding theory. In particular, $V_d$ is an example of a $2$--distance set in $\mathbb{R}^{d-1}$. There are many cases when the number of points in a maximal 2--distance set is at most $|V_d|=$${d}\choose {2}$.)

It is easy to see that there is one--to--one correspondence between balanced subsets of $V_d$ and balanced families  $\Phi=\{S_1,...,S_m\}$, where all $S_i$ are 2-subsets of $[d]$.

Denote by $c_d$ the center of mass of $V_d$. Then $c_d=(\frac{1}{d}, \dots, \frac{1}{d})\in \Pi_d$. The main goal of this section is to describe the set of all minimal balanced sets $S \subset V_d$. 

\medskip

Let $K_d$ be a complete graph on $d$ vertices  $\{a_1,...,a_d\}$. We will identify the vertices $\{a_1,...,a_d\}$ with the vectors $\{e_1,...,e_d\}$ and with their indices $\{1,2,...,d\}$. 

Let $S$ be a subset of $V_d$. Define a graph $G(S)$ as a subgraph of $K_d$ by the rule: {\em $(i,j)$ is an edge of $G(S)$ iff $e_{ij} \in S$.} 

Denote by $n(S)$ the number of vertices of $G(S)$. Note that the edges of this graph correspond to the elements of $S$. It is clear that  
$$
S\subseteq V_{n(S)}\subset \Pi_{n(S)}\subset \mathbb R^{n(S)} \subseteq  \mathbb R^d
$$ 
and we have the following statement.

\begin{lemma} Let $S$ be a subset of $V_d$. Suppose that  $S=S_1\cup S_2$, where $S_1\cap S_2=\emptyset$ and $G(S)$ is the disjoint union of graphs $G(S_1)$ and $G(S_2)$. Then
$$
 S_i\subset \mathbb R^{n(S_i)}, \: i=1,2; \quad  \mathbb R^{n(S)}= \mathbb R^{n(S_1)} \oplus \mathbb R^{n(S_2)}
$$
\end{lemma}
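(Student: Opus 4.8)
The plan is to reduce both assertions to a single combinatorial fact: the vertex set of $G(S)$ decomposes as a disjoint union of the vertex sets of $G(S_1)$ and $G(S_2)$. Throughout I interpret $\mathbb{R}^{n(T)}$, for a subset $T \subseteq V_d$, as the coordinate subspace of $\mathbb{R}^d$ spanned by those basis vectors $e_i$ for which $a_i$ is a vertex of $G(T)$; this is precisely the ambient space appearing in the chain $T \subseteq V_{n(T)} \subset \Pi_{n(T)} \subset \mathbb{R}^{n(T)}$ displayed just before the lemma. I write $V(H)$ for the vertex set of a graph $H$.

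First I would dispose of the inclusions $S_i \subset \mathbb{R}^{n(S_i)}$. These are immediate: applying the displayed chain of inclusions to the subsets $S_1$ and $S_2$ of $V_d$ separately gives $S_i \subseteq V_{n(S_i)} \subset \mathbb{R}^{n(S_i)}$ for $i=1,2$, with no further argument needed.

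The heart of the matter is the direct sum decomposition. Here I would unpack the hypothesis that $G(S)$ is the disjoint union of $G(S_1)$ and $G(S_2)$: by definition this means the vertex sets $V(G(S_1))$ and $V(G(S_2))$ are disjoint and every edge of $G(S)$ lies entirely within one of the two parts, so that
$$
V(G(S)) = V(G(S_1)) \sqcup V(G(S_2)).
$$
Translating through the identification of each vertex $a_i$ with the basis vector $e_i$, the spanning families $\{e_i : a_i \in V(G(S_1))\}$ and $\{e_i : a_i \in V(G(S_2))\}$ are disjoint subsets of the orthonormal basis whose union spans $\mathbb{R}^{n(S)}$; in particular $n(S) = n(S_1) + n(S_2)$.

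Finally, since distinct basis vectors are linearly independent, two coordinate subspaces of $\mathbb{R}^d$ indexed by disjoint sets of basis vectors meet only in the origin, while their sum is the coordinate subspace indexed by the union of the two index sets. Applying this to $\mathbb{R}^{n(S_1)}$ and $\mathbb{R}^{n(S_2)}$ yields $\mathbb{R}^{n(S)} = \mathbb{R}^{n(S_1)} \oplus \mathbb{R}^{n(S_2)}$, as claimed. I expect the only point requiring care is the correct reading of the disjoint-union hypothesis on $G(S)$ --- specifically that it forbids any edge running between the two vertex classes, which is what forces $V(G(S_1))$ and $V(G(S_2))$ to be disjoint --- since once the vertex sets are seen to be disjoint the remaining linear algebra is routine.
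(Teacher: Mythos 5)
Your proof is correct and follows the only natural route: the paper states this lemma without any proof, treating it as immediate from the identification of vertices with basis vectors, and your write-up simply makes that implicit argument explicit. In particular, your careful reading of the disjoint-union hypothesis (no edges between the two vertex classes, hence disjoint vertex sets and $n(S)=n(S_1)+n(S_2)$) is exactly the point the authors are relying on when they invoke the lemma to place the connected components in mutually orthogonal coordinate subspaces.
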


This lemma yields that if $G(S_1),...,G(S_k)$, $k>1$,  are connected components of $G(S)$ then the sets $S_i$ lie in mutually orthogonal subspaces of $\mathbb R^{n(S)}$. 

\begin{lemma} 
Let $S\subset V_d$ be a balanced set. Suppose  $G(S_1),...,G(S_k)$ are connected components of  $G(S)$. Then for all $i=1, \dots, k$ the set $S_i$ is also balanced for $V=V_{n(S_i)}$.  
\end{lemma}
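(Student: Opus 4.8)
The plan is to argue directly from the geometric definition of balancedness and to use the orthogonal decomposition furnished by Lemma~1. Since $S$ is balanced for $V_d$, the center of mass $c_d=(\tfrac1d,\dots,\tfrac1d)$ can be written as a convex combination $c_d=\sum_{e\in S}\lambda_e\,e$ with $\lambda_e\ge 0$ and $\sum_{e}\lambda_e=1$. As a preliminary remark I would note that, because every coordinate of $c_d$ is strictly positive, every vertex of $[d]$ must be incident to at least one edge of $G(S)$; hence $G(S)$ has no isolated vertices, $n(S)=d$, and the vertex sets $I_1,\dots,I_k$ of the components $G(S_1),\dots,G(S_k)$ partition $[d]$.

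First I would split this convex combination according to the components, writing $p_\ell:=\sum_{e\in S_\ell}\lambda_e\,e$ for each $\ell$. Every point $e_{ij}$ with $(i,j)\in G(S_\ell)$ lies in the coordinate subspace $\mathbb R^{n(S_\ell)}$ spanned by $\{e_m:m\in I_\ell\}$, so $p_\ell\in\mathbb R^{n(S_\ell)}$. By the consequence of Lemma~1 recorded above, these subspaces are mutually orthogonal and, since the $I_\ell$ partition $[d]$, their direct sum is all of $\mathbb R^{d}$. Thus $c_d=\sum_\ell p_\ell$ is exactly the decomposition of $c_d$ into its orthogonal projections, and uniqueness of the direct-sum decomposition lets me read off $p_\ell=\tfrac1d\sum_{m\in I_\ell}e_m$, the restriction of $c_d$ to the coordinates indexed by $I_\ell$.

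Next I would normalize. Putting $\Lambda_\ell:=\sum_{e\in S_\ell}\lambda_e$ and comparing coordinate sums on the two sides of $p_\ell=\tfrac1d\sum_{m\in I_\ell}e_m$ (each $e_{ij}$ has coordinate sum $1$) gives $\Lambda_\ell=n(S_\ell)/d>0$. Dividing the expression for $p_\ell$ by $\Lambda_\ell$ then yields $\sum_{e\in S_\ell}(\lambda_e/\Lambda_\ell)\,e=\tfrac{1}{n(S_\ell)}\sum_{m\in I_\ell}e_m=c_{n(S_\ell)}$, where the coefficients $\lambda_e/\Lambda_\ell$ are non-negative and sum to $1$. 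This exhibits the center of mass of $V_{n(S_\ell)}$ as a convex combination of the points of $S_\ell$, i.e.\ $S_\ell$ is balanced for $V_{n(S_\ell)}$, as required.

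The one step that carries all the weight is the identification $p_\ell=\tfrac1d\sum_{m\in I_\ell}e_m$. It rests on two ingredients that must be firmly in place: the orthogonal direct-sum structure from Lemma~1, which prevents any interference between the contributions of different components, and the observation that balancedness forces $G(S)$ to cover all of $[d]$, so that the orthogonal projections genuinely account for the whole of $c_d$. Once the positivity $\Lambda_\ell>0$ is noted, the final normalization is automatic.
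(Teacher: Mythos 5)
Your proof is correct and follows exactly the route the paper intends: the paper states this lemma without an explicit proof, offering it as a consequence of the orthogonal direct-sum decomposition of Lemma~1, which is precisely what you spell out. Your additional observations --- that balancedness forces every vertex of $[d]$ to be covered so the component vertex sets partition $[d]$, and that the normalization constant is $\Lambda_\ell = n(S_\ell)/d$ --- are accurate and fill in the details the paper leaves implicit.
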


The next lemma follows directly from Shapley's  definition:

\begin{lemma} Let $S$ be a minimal balanced set, $|S|>1$. Suppose that the graph $G(S)$ is connected. Then $G(S)$ has no vertices of degree 1. 
\end{lemma}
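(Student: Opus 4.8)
The plan is to reformulate balancedness as a nonnegative weighting of the edges of $G(S)$ and then exploit minimality to force all such weights to be strictly positive; a degree-$1$ vertex will then force a neighbouring vertex to overshoot the common target, a contradiction. First I would record the edge-weight description. Writing $n=n(S)$, the set $S$ is balanced for $V_n$ precisely when $c_n=(\tfrac1n,\dots,\tfrac1n)\in\conv(S)$, i.e. there are coefficients $\lambda_{ij}\ge0$ with $\sum\lambda_{ij}=1$ and $\sum\lambda_{ij}e_{ij}=c_n$. Comparing the coefficient of each $e_k$ shows this is equivalent to the condition that, for every vertex $k$ of $G(S)$, the sum of $\lambda_{ij}$ over edges incident to $k$ equals the common value $2/n$. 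So a balancing is exactly a nonnegative edge-weighting of $G(S)$ in which every vertex has the same weighted degree $2/n$.

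The key step is to show that for a minimal balanced set every balancing weight is strictly positive. Suppose $\lambda_{ij}=0$ for some edge $(i,j)$. Each endpoint has weighted degree $2/n>0$, hence each carries at least one further incident edge; therefore deleting $(i,j)$ removes no vertex and leaves $n$ unchanged. The same coefficients then certify that $S\setminus\{e_{ij}\}$ is balanced for the same $V_n$, contradicting minimality. I expect this to be the main obstacle: the delicate point is the vertex bookkeeping, since dropping an edge that turns out to be the last one at some vertex would change $n$ and hence the target $2/n$; the positivity of the weighted degrees is exactly what guarantees this does not happen.

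Finally I would run the degree-$1$ argument. Let $v$ be a vertex of degree $1$ with unique incident edge $e=(v,u)$. Then the weighted degree of $v$ equals $\lambda_e$, so $\lambda_e=2/n$. Since $G(S)$ is connected and $|S|>1$, the vertex $u$ cannot also have degree $1$ (otherwise $\{v,u\}$ with the single edge $e$ would be all of $G(S)$, forcing $|S|=1$), so $u$ carries a second edge $e'$, which by the positivity step satisfies $\lambda_{e'}>0$. Then the weighted degree of $u$ is at least $\lambda_e+\lambda_{e'}>2/n$, contradicting the balancing condition. Hence $G(S)$ has no vertex of degree $1$.
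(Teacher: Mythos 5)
Your proof is correct: the reformulation of balancedness as a nonnegative edge-weighting of $G(S)$ with constant weighted degree, the observation that minimality forces every weight to be strictly positive (since the support of the weights is itself a balanced subfamily covering all vertices), and the resulting overshoot at the neighbour of a degree-$1$ vertex are all sound. The paper gives no argument here beyond asserting that the lemma ``follows directly from Shapley's definition,'' and your write-up is exactly that direct argument, with the one genuinely delicate point --- that dropping a zero-weight edge cannot delete a vertex --- handled correctly.
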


In Section 1 we defined cyclic and isolated families. For $S\subset V_n$ these definitions mean: $S$ is cyclic if  $G(S)$ is a polygon with $n$--vertices and $S$ is isolated if $n=2$ and so $S$ contains only one vertex. 

\begin{lemma} Let $S$ be a minimal balanced set of $V=V_n$ with $n>2$. Suppose that  $G(S)$ is connected and $n(S)=n$. Then  $n$ is odd and $S$ is cyclic.
\end{lemma}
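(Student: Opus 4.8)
The plan is to translate the balancedness of $S$ into an explicit system of weight equations on the edges of the connected graph $G(S)$, and then to exploit both the minimality of $S$ (no proper balanced subset) and Lemma~9 (no degree-$1$ vertices) to pin down the structure of $G(S)$. Since $S \subset V_n$ is balanced, the center of mass $c_n = (\tfrac1n,\dots,\tfrac1n)$ lies in $\conv(S)$, so there exist weights $w_{ij}\ge 0$ on the edges $(i,j)\in G(S)$ with $\sum_{(i,j)} w_{ij} = 1$ and $\sum_{(i,j)} w_{ij}\, e_{ij} = c_n$. Reading off coordinate $i$ of the equation $\sum w_{ij} e_{ij} = c_n$, and using $e_{ij} = \tfrac12(e_i + e_j)$, gives for each vertex $i$ the balance relation
\[
\sum_{j:\,(i,j)\in G(S)} w_{ij} = \frac{2}{n},
\]
that is, the weights incident to every vertex sum to the same constant $2/n$. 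This is the key reformulation: $S$ balanced with all weights positive means $G(S)$ carries a positive edge-weighting that is \emph{regular at every vertex}.

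First I would argue that minimality forces \emph{all} weights to be strictly positive: if some $w_{ij}=0$, the remaining edges already satisfy the balance relations and would witness a proper balanced subset, contradicting minimality (here one must be slightly careful to check the resulting subset still spans the same $c_n$, but the vertex-sum equations are preserved since the zero edge contributes nothing). Next, from Lemma~9 every vertex of the connected graph $G(S)$ has degree at least $2$. I would then show that no vertex can have degree $\ge 3$: the plan is to count edges via $\sum_i \deg(i) = 2|S|$, and to combine this with minimality. The cleanest route is dimensional — a minimal balanced set in $\mathbb{R}^{n(S)}$ restricted to the hyperplane $\Pi_n$ must be affinely independent apart from the single affine dependence expressing $c_n$, so $|S| = \dim + 1 = (n-1)+1 = n$ edges; combined with $\sum_i \deg(i) = 2n$ over $n$ vertices each of degree $\ge 2$, this forces every vertex to have degree \emph{exactly} $2$. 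A connected graph with all vertices of degree $2$ is a single cycle, so $G(S)$ is a polygon on all $n$ vertices, i.e.\ $S$ is cyclic.

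It remains to show $n$ is odd. With $G(S)$ a cycle $i_1 i_2 \cdots i_n i_1$, write the weight on edge $(i_t, i_{t+1})$ as $w_t$. The vertex-balance equations become $w_{t-1} + w_t = 2/n$ for every $t$ (indices mod $n$), a cyclic linear recurrence. Solving it gives $w_t = 1/n + (-1)^t(w_0 - 1/n)$, so the weights alternate between two values around a cycle. I would observe that consistency of the recurrence around the full cycle requires $(-1)^n = 1$ to fail into a forced value only when $n$ is odd: if $n$ is even the system has a one-parameter family of solutions, producing a proper balanced subfamily (the two perfect matchings along the even cycle are each separately balanced), which contradicts minimality; if $n$ is odd the recurrence closes up uniquely with $w_t = 1/n$ for all $t$, giving a genuinely minimal balanced cycle. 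Thus $n$ is odd and $S$ is cyclic, as claimed.

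The main obstacle I anticipate is the edge-count / degree-$2$ step: rigorously ruling out degree-$\ge 3$ vertices requires either the affine-independence (dimension) argument to be set up carefully on the hyperplane $\Pi_n$, or a direct combinatorial argument that a vertex of degree $\ge 3$ lets one delete an edge while preserving a balanced subset — and verifying that such deletion keeps the weights nonnegative is exactly where minimality must be invoked with care. The parity argument at the end is then essentially the standard fact that an even cycle's edges split into two balanced matchings, which is precisely what minimality forbids.
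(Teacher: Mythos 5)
Your proof is correct and follows essentially the same route as the paper: a Carath\'eodory/affine-independence bound $|S|\le n$ combined with the no-degree-$1$ lemma forces $G(S)$ to be an $n$-cycle, and the two perfect matchings of an even cycle give exactly the contradiction with minimality that the paper uses. The only nitpick is that the dimension argument alone yields $|S|\le n$ rather than $|S|=n$; equality follows only after combining with the degree count, which your argument does in fact supply.
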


\begin{proof} Our proof relies on the following well--known theorem:

\medskip

\noindent {\bf Carath\'eodory's theorem.} {\em If a point $x$ lies in the convex hull of a set $P$ in $\mathbb R^m$, then $x$ can be written as the convex combination of at most $m+1$ points in $P$.}

\medskip

Note that $S$ is a subset of $(n-1)$--dimensional Euclidean space. By the assumption the number of vertices of $G(S)$ is  $n$ and  $c_n$ lies in $\conv(S)$.

Caratheodory's theorem implies that $c_n$ is the convex combination of $\ell\le n$ vertices from $S$. From the minimality of $S$ it follows that this subset of vertices coincides with $S$ and $|S|=\ell$, hence $|S|\le n$.

We see that the graph $G(S)$ is on $n$ vertices and has at most $n$ edges. Then from Lemma 4 it follows that this graph is an $n$--polygon.

Suppose $n=2k$, $S=\{(1,2)(2,3),...,(2k,1)\}$, and $S'=\{(1,2)(3,4),...,(2k-1,2k)\}$. Then $|S'|=k$. Since $c_n$ is covered by the convex hull of $S'$ and $S'\subset S$, we see that $S$ is not minimal and $n$ can not be even.
\end{proof}

{\bf Theorem 1 directly follows  from lemmas 3 and 5.}

%%%%%%%%%%%%%%%%%%%%
%%%%%%%%%%%%%%%%%%%%

\section{Tucker, Fan, and Shashkin lemmas as corollaries of the balanced sets theorem}

\subsection{Discrete versions of fixed point theorems.} 
Let a space $X$ is covered by $m$ open (or closed) sets and $V=\{v_1, v_2, \dots, v_m\}\subset\mathbb{R}^{d}$. Following  \cite{MusH}, we can construct a map $f_V:X\to \text{conv}(V)$. 

Let $T$ be a triangulation of a manifold $M$. A vertex coloring $L:V(T)\to \{1,...,m\}$  is a special case of covering. Then we can define a map $f_L:T\to \text{conv}(V)$. Main results about these maps may be found in  \cite[Theorem 3.1]{MusH},  \cite[Cor. 3.2]{MusH}, and \cite[Th. 4.2]{MusBC}. Here we give corollaries of these theorems.

\medskip

\noindent {\bf Theorem A.} {\em Let $V:=\{v_1, \dots, v_m\}\subset \mathbb{R}^{d}$. Let $F=\{F_1, \dots, F_m\}$ be a closed (or open) covering of $n$-dimensional disc. Suppose that $F$ is such that  $f_V$ is not null--homotopic on the boundary. Then there is a minimal  balanced set $I\in\bs(V)$ such that $\cap_{i \in I} F_i \neq \emptyset$. }

\medskip 

\noindent {\bf Theorem B.}    {\em Let $V:=\{v_1, \dots, v_m\}\subset \mathbb{R}^{d}$. Let $T$ be a triangulation of $n$-dimensional disc and $L:V(T)\to \{1,...,m\}$ be a coloring of $V(T)$.  Suppose that $f_L$ is not null--homotopic on the boundary. Then there is a simplex $s$ in $T$ and  $I\in\bs(V)$ such that vertices of $s$ are colored with all colors from $I$. }

\medskip 

Note that the {\em ``not null-homotopic''} on the boundary condition is true in the case of classical fixed point theorems. In particular, {\em Sperner's coloring},  {\em KKM's covering}, and  {\em antipodal coloring} on the boundary are special cases of this condition.

Using theorems A and B  we can obtain discrete versions of fixed point theorems.

\medskip 

Suppose $m=d+1$, $n=d$. Let $F$ be a covering of $d$--dimensional simplex $\Delta^d \subset \mathbb{R}^{d}$ with the vertex set $V$. Assume that $F$ satisfies the boundary conditions of the KKM lemma. Then this covering is not null--homotopic on the boundary of $\Delta^d$ and the KKM lemma follows from Theorem A. Sperner's lemma can be easily deduced from the KKM lemma or from Theorem B.

\medskip 

\noindent {\em Every Sperner coloring of a triangulation of $\Delta^d$ contains a cell whose vertices all have different colors.}

\medskip 

 Shapley's KKMS lemma \cite{Sh,SV} also can be easily deduced from Theorem A, see \cite[Cor. 4.2]{MusBC}.  In this case the set of points $V$ is the set of all centers of mass of $k$-vertex subsets of $\Delta^d$, where $1\le k \le d$.  
 
 \medskip

Let  $V=V_d$ and we are in the conditions of Theorems A and B. {\em Then Theorem 1 yields a new result for colorings with ${d} \choose {2}$ colors.}

 \subsection{Antipodal balanced 2-subsets.}

  Let $P$ be a centrally symmetric polytope in $\mathbb{R}^{d}$ with the vertex set $V=V(P)$. In other words, if $v\in V$, then $(-v)\in V$. We see that the center of mass  $c_V=O$, where $O$ is the origin of $\mathbb{R}^{d}$. It is clear that the {\em family of balanced  $2$-subsets of $V$ is the set of all antipodal pairs $(v,-v)$, where $v\in V$.}

Suppose that $T$ and $L$ from Theorem B are both antipodally symmetric on the boundary. Then (see \cite{MusS,MusH,MusBC}) $f_L$ is not null-homotopic on the boundary, hence we can use Theorem B.
  
 \medskip 

 \noindent {\bf 1.}   Let  $e_1,\dots, e_{d}$ be a standard orthonormal basis for $\mathbb{R}^{d}$. Let $P$ be a regular cross polytope with the vertex set $V=\{\pm e_1,...,\pm e_d\}$. Then we see that $\bs(V)$ coincides with the set of all pairs of antipodal vertices. Then Tucker's lemma follows from Theorem B.

\medskip 

\noindent  {\em Let $T$ be a triangulation of a $d$-dimensional disc  such that $T$ is antipodally symmetric on the boundary. Let $L : V(T) \rightarrow  \{+1, -1, +2, -2, \dots , +d, -d\}$ be a coloring that is antipodal (i.e. $L(-v) = -L(v)$) for  all vertices $v$ on the boundary. Then there exists a complementary edge, i.e.  $[u,v]\in T$ such that  $L(u)=-L(v)$.}

\medskip 

\noindent {\bf 2.} In \cite[Th. 5.2]{MusSpT} was constructed a convex polytope $P(n,d) \subset \mathbb{R}^{d}$ with $2n$ centrally symmetric vertices $V=\{\pm v_1, \dots,\pm  v_n\}$ such that $\bs(V)$ consists of antipodal pairs $(v_i,-v_i)$, $i=1,...,n$ and $d$-simplices with vertices $(v_{k_0}, -v_{k_1}, \dots,(-1)^{d}v_{k_d})$ or $\{-v_{k_0}, v_{k_1}, \dots, (-1)^{d+1}v_{k_d})$, where $1 \leq k_0 < \dots < k_d \leq n$. Then Theorem B yields Ky Fan's lemma: 

\medskip 

\noindent {\em Let $T$ be a triangulation of a $d$-dimensional disc that is antipodally symmetric on the boundary. Let $L : V(T) \rightarrow  \{+1, -1, +2, -2, \dots , +n, -n\}$ be a coloring  that is antipodal on the boundary. Suppose that there are no complementary edges in $T$. Then there are an odd number of alternating $d$-simplices, i.e. simplices that are colored by $(k_0, -k_1, k_2, \dots ,(-1)^dk_d)$, where $1 \leq |k_0| < \dots < |k_d| \leq n$ and all $k_i$ are of the same sign.}
  
\medskip 

\noindent {\bf 3.} In  \cite{MusS} is considered an extended version of Shashkin's lemma.
  
\medskip 

\noindent {\em Let $T$ be a triangulation of a $(d-1)$-dimensional disc that is antipodally symmetric on the boundary. Let $L : V(T) \rightarrow  \{+1, -1, +2, -2, \dots , +d, -d\}$ be a coloring that is antipodal on the boundary. Suppose that there are no complementary edges in T. Then for every set of colors $\Lambda=\{\ell_1, \dots,\ell_d\}$, where $|\ell_i|=i$ for $i=1, \dots, d$, there are an odd number of cells in $T$ that are labelled by $\Lambda$ or $(-\Lambda)$.}

\medskip 

The proofs in \cite{MusS} do not rely on Theorem B. Here we show that Shashkin's lemma follows from this theorem.

Let $\Delta$ be a $(d-1)$--dimensional simplex in $\mathbb{R}^{d-1}$ with the center of mass at the origin $O$. Let $v_1,...,v_d$ be the vertices of $\Delta$. Suppose that $V$ is the set of points $\{\pm v_1,...,\pm v_d\}$. It is easy to prove that $\bs(V)$ consists of the set of all pairs $(v_i,-v_i)$ and the sets $\{v_1,...,v_d\}$ and $\{-v_1,...,-v_d\}$. Assign a color $\ell_i$ to a vertex $v_i$ and $(-\ell_i)$ to $-v_i$. Then $f_L$ is antipodal on the boundary and Shashkin's lemma follows from Theorem B.

 \medskip
 
 M. V. Bludov, MIPT
 
  {\it E-mail address:} michaelbludov@gmail.com
  
  \medskip 

 O. R. Musin,  University of Texas Rio Grande Valley, School of Mathematical and
 Statistical Sciences, One West University Boulevard, Brownsville, TX, 78520, USA.

 {\it E-mail address:} oleg.musin@utrgv.edu

\end{document}